\documentclass[12pt]{amsart}
\usepackage{amsmath,amscd,amsthm,amsfonts, amssymb,amsxtra, mathrsfs,enumerate,relsize}
\usepackage{calrsfs}
\usepackage{caption}
\usepackage[us,12hr]{datetime}
\usepackage[all]{xy} \SelectTips{cm}{}
\usepackage{hyperref}
 \hypersetup{colorlinks=true,citecolor=blue}
   \usepackage{graphicx}
     \usepackage[margin=1.5in]{geometry}
   \usepackage[font=scriptsize]{caption}
\usepackage{todonotes}
\usepackage{tikz}
\usetikzlibrary{shapes.geometric,decorations.pathreplacing}
\usetikzlibrary{positioning}
\CDat
\usepackage{scalerel}[2016/12/29]

\subjclass{Primary: 57P10}
\newtheorem{thm}{Theorem}[section]  
\newtheorem*{un-no-thm}{Theorem}
\newtheorem{prop}[thm]{Proposition}

\newtheorem{bigthm}{Theorem}

\theoremstyle{definition}
\newtheorem{defn}[thm]{Definition}   

\theoremstyle{definition}
\newtheorem{notation}[thm]{Notation}

\theoremstyle{definition}

\theoremstyle{definition}

\theoremstyle{remark}
\newtheorem{rem}[thm]{Remark}

\newtheorem*{out}{Outline}

\DeclareMathOperator{\Sp}{Sp}

\begin{document}
\title{A relative Euclidean thickening}
\date{\today}
\author{John R.\ Klein}
\address{Wayne State University, Detroit, MI 48202}
\email{klein@math.wayne.edu}
\begin{abstract}  For a finite CW pair $(K,L)$, we show how to construct
a Poincar\'e triad $(P;\partial_0 P,\partial_1 P)$ and a weak homotopy equivalence
$(K,L) @> {}_{\sim} >> (P,\partial_0 P)$. Furthermore, the Poincar\'e pair $(P,\partial P)$ has a trivial Spivak normal fibration.
 The proof is homotopy-theoretic. We also prove a uniqueness result.
\end{abstract}
\maketitle
\setlength{\parindent}{15pt}
\setlength{\parskip}{1pt plus 0pt minus 1pt}

\def\Sp{\text{\bf Sp}}
\def\vo{\varOmega}
\def\vs{\varSigma}
\def\smsh{\wedge}
\def\flush{\flushpar}
\def\id{\text{id}}
\def\dbslash{/\!\! /}
\def\codim{\text{\rm codim\,}}
\def\:{\colon}
\def\holim{\text{holim\,}}
\def\hocolim{\text{hocolim\,}}
\def\Bbb{\mathbb}
\def\bold{\mathbf}
\def\Aut{\text{\rm Aut}}
\def\cal{\mathcal}
\def\sec{\text{\rm sec}}
\def\gda{G\text{\rm -}\delta\text{\rm -}\alpha}
\def\PDD{\text{\rm pd\,}}
\def\PD{\text{\rm P}}
\def\stableto {\,\, \mapstochar \!\!\to}

\setcounter{tocdepth}{1}
\tableofcontents
\addcontentsline{file}{sec_unit}{entry}
 
 \section{Introduction}
The notion of a manifold thickening of a finite complex was invented by Mazur \cite{Mazur1963} and was further developed by Wall
\cite{Wall_thickening}. The idea is that one can replace the cells of a finite complex $K$
 inductively by handles provided that the geometric dimension of the handles is sufficiently large. 
 The result is a compact smooth manifold $M$ with boundary having the homotopy type of $K$. Then heuristically, $M$
 is a ``regular neighborhood'' of $K$. If the dimension is sufficiently large, then the set of concordance classes
 of manifold thickenings of $K$ is in bijection with $[K,BO]$, the abelian group of isomorphism classes of 
 stable vector bundles over $K$ \cite[prop.~5.1]{Wall_thickening}.

 In a recent manuscript \cite{klein2025poincaresurgery}, it was necessary to deal with the analogous notion of {\it Poincar\'e thickening.} 
 Roughly speaking, the latter consists of a Poincar\'e pair $(P,\partial P)$ together with a choice of homotopy equivalence
 $K \simeq P$.
The collection of Poincar\'e thickenings of a given dimension $d$
is equipped with an equivalence relation, concordance, and the resulting set of equivalence classes is
 denoted by
 \[
 \cal T_d(K)\, .
 \]
 There is also a stabilization map $\sigma \:  \cal T_d(K) \to  \cal T_{d+1}(K)$ which is induced by taking cartesian product with the unit interval. 
 If we let $\cal T_\infty(K)$ denote the colimit
 with respect to stabilization, there is a free and transitive action
 \[
   [K,BG]\times \cal T_\infty(K) \to \cal T_\infty(K)\, ,
  \]
  where  $[K,BG]$ is the abelian group of fiber homotopy types of stable spherical fibrations over $K$ 
  (compare \cite[p.~79]{Mazur1963} in the smooth case).
  The action is defined by twisted suspension with respect to a spherical fibration. 
  
Furthermore, $\cal T_\infty(K)$ is equipped with a preferred basepoint
which is represented by any Poincar\'e thickening
 $(P,\partial P)$ having trivial Spivak normal fibration. 
 We call any such thickening a {\it Euclidean Poincar\'e thickening}. It is unique up to stable concordance.
As the action is free and transitive, the  orbit of the basepoint  defines a bijection
 \[
  \cal T_\infty(K) \cong    [K,BG]\, .
  \]
  In particular, to every stable thickening of $K$ there is a corresponding homotopy class $K \to BG$; call this the {\it classifying map} of the thickening.
 If one is willing to appeal to manifolds, then 
 Spivak's paper \cite{Spivak} provides a construction of a Euclidean thickening.
 In \cite{Klein_dualizing} we gave an alternative homotopy-theoretic construction.
  The goal of this paper is to {\it relativize} the result.

  \subsection{Relative Poincar\'e thickenings}
 
 Let $(P;\partial_0 P,\partial_1 P)$ be a Poincar\'e triad of dimension $d$. 
 We set \[
 \partial_{01} P = \partial_0 P \cap \partial_1 P\, .
 \]
 Then $(\partial_0 P,\partial_{01} P)$ and
 $(\partial_1 P,\partial_{01} P)$ are Poincar\'e pairs of dimension $d-1$, and $\partial P =\partial_0 P \cup_{\partial_{01} P} \partial_1 P$
 is such that $(P,\partial P)$ is a Poincar\'e pair of dimension $d$.

 \begin{defn}[{\cite[p.~764]{GK}}] \label{defn:spine} The {\it homotopy spine dimension} of $(P;\partial_0 P,\partial_1 P)$
  (relative to $\partial_0 P$) is $\le p$ 
if
\begin{enumerate}[(i).]
\item the pair $(P,\partial_0 P)$ is homotopically $p$-dimensional (i.e., up to homotopy, $(P,\partial_0 P)$ is a retract of a CW pair of relative dimension $\le p$), and
\item the pair $(P,\partial_1 P)$ is $(d-p-1)$-connected.
\end{enumerate}
\end{defn}

 \begin{defn} \label{defn:rel-thickening}
Let $(K,L)$ be a cofibrant  pair, where $L$ is connected.
A {\it Poincar\'e thickening} of $(K,L)$ of dimension $d$ 
is a pair
\[
((P;\partial_0 P,\partial_1 P), f)
\]
consisting of a Poincar\'e triad 
\[
(P;\partial_0 P,\partial_1 P)
\]
of dimension $d$ having  homotopy spine dimension $\le d-3$, 
and a weak homotopy equivalence 
\[
f\: (K,L) @> \sim >> (P,\partial_0 P)\, .
\]  
In addition, we require that
the pair $((\partial_0 P,\partial_{01} P),f_{|L})$ is a Poincar\'e thickening of dimension $d-1$.
\end{defn}

\begin{rem} The definition 
codifies the notion of  ``regular neighborhood''  $P$ of $K$ up to homotopy 
which meets the boundary $\partial P$ in a ``regular neighborhood'' $\partial_0 P$ of $L$.
\end{rem}

\begin{notation}  To avoid clutter, we set 
\[
(P,f) := ((P;\partial_0 P,\partial_1 P), f)\, ,
\]
whenever the triad structure on $P$ is understood.
\end{notation}

Poincar\'e thickenings $(P, f)$ and $(Q,g)$ of $(K,L)$ 
are {\it concordant} if there is a weak homotopy equivalence of triads 
\[
h\: (P;\partial_0 P,\partial_1 P) @>\sim >> (Q;\partial_0 Q,\partial_1 Q)
\]
such that $h\circ f$ is homotopic to $g$.

\begin{defn} 
We say that $(P,f)$ is {\it Euclidean} if the Spivak normal  fibration 
of $(P,\partial P)$ is trivializable. 
\end{defn}

The main result is then the following:

\begin{bigthm} \label{bigthm:relative} There exists a Euclidean Poincar\'e  thickening
of $(K,L)$.
\end{bigthm}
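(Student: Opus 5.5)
We build the triad in two stages: first a Euclidean thickening of $L$, then an extension of it over the relative cells of $(K,L)$. Throughout we use the absolute result of \cite{Klein_dualizing}, which produces a Euclidean Poincar\'e thickening of any finite complex in all sufficiently large dimensions.

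\emph{Stage 1: thickening $L$.} Choose $d$ large compared with $\dim K$. Apply \cite{Klein_dualizing} to $L$ to get a Euclidean Poincar\'e pair $(Q,\partial Q)$ of dimension $d-1$ and an equivalence $L \simeq Q$. Taking $d$ large ensures that the spine dimension of $(Q,\partial Q)$ is at most $d-4$. Then $(\partial_0 P,\partial_{01}P) := (Q,\partial Q)$ is the required $(d-1)$-dimensional thickening of $L$.

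\emph{Stage 2: filling in $P$.} Form the collar $Q\times [0,1]$, a Poincar\'e triad with $\partial_0 = Q\times 0$ and $\partial_1 = \partial Q\times[0,1]\cup Q\times 1$. We attach to it ``Poincar\'e handles'' along $Q\times 1$, one for each relative cell of $K\setminus L$, in order of increasing dimension. Consider a cell $e^k$ with attaching map $S^{k-1}\to K^{(k-1)}$. Since $d-1-(k-1) \ge 3$, the attaching map compresses, up to homotopy, into the current boundary part $\partial_1$. A Poincar\'e embedding theorem (of the type available in \cite{klein2025poincaresurgery} and \cite{GK}) then gives a Poincar\'e embedding of $S^{k-1}$ into $\partial_1$ with trivial normal fibration. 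Attach $D^k\times D^{d-k}$ along $S^{k-1}\times D^{d-k}$ and glue Poincar\'e duality along the complement. The result is again a Poincar\'e triad, and its $\partial_0$ is unchanged.

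\emph{The main obstacle} is precisely this handle-attachment step: producing the Poincar\'e embedding of the attaching sphere in $\partial_1$ with \emph{trivial} normal data, and checking that the glued object satisfies duality. I would first try to get the embedding from the stable range, where $k-1$ is small relative to $d-1$. Once an embedding exists, the normal bundle can be made trivial after stabilizing, because the attaching sphere bounds the core disk.

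\emph{Remaining checks.}
\begin{enumerate}[(i).]
\item After all handles are attached, $(P,\partial_0 P)\simeq (K,L)$ by construction. The pair has relative dimension $\le \dim K \le d-3$, and $(P,\partial_1 P)$ is $(d-\dim K-1)$-connected by the dual handle decomposition. So the homotopy spine condition of Definition~\ref{defn:spine} holds.
\item The Spivak fibration of $(P,\partial P)$ restricts over $L$ to that of the collar, which is trivial by Stage 1. Over each handle it is extended trivially, since the normal data of each handle is trivial. So it is trivial over $P\simeq K$.
\item If one prefers to avoid the explicit handle-by-handle trivialization in (ii), there is a fallback. The obstruction to triviality is a class in $[K/L,BG]$, since the fibration is already trivial on $L$. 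Twisted suspension by the opposite class, which is trivial on $L$, realizes the needed correction after further stabilization. This uses the action recalled in the introduction, now in its relative form.
\end{enumerate}
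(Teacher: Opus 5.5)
\textbf{There is a genuine gap in Stage 2.} The step you flag is left unproven, and your reason for why the attaching sphere has trivial normal data is wrong.

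\emph{Existence of the embedding.} Nothing you cite gives an unstabilized Poincar\'e embedding of $S^{k-1}$ in the interior of the $(d-1)$-dimensional pair $(\partial_1,\partial_{01})$. The only embedding result in this paper, Theorem~\ref{thm:embedded-thickening}, works after the target is replaced by $\partial_1\times D^j$. That is a face of your triad only if you suspend the whole triad at each step, which your fixed-$d$ scheme never does.

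\emph{Normal data.} This is the more serious problem. For any Poincar\'e embedding $S^{k-1}\subset\partial_1$, the normal fibration is stably the pullback of the Spivak fibration $\nu_P$ of the current stage along the attaching map, a class in $\pi_{k-1}(BG)$. Stabilizing does not change this class. That the sphere ``bounds the core disk'' is irrelevant, because the disk is not yet in $P$. So a $k$-handle can be attached only if the current stage is Euclidean, or at least $\nu_P$ extends over $K^{(k)}$, and that depends on how the earlier handles were framed.

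\emph{Example.} Take $K=D^k=S^{k-1}\cup_{\mathrm{id}}e^k$, with cells in dimensions $0,k-1,k$ and $L$ the $0$-cell. Suppose the $(k-1)$-handle is glued using a trivialization twisted by an element of $\pi_{k-2}(G_{d-k+1})$ with nonzero image in $\pi_{k-2}(G)$; here $G_{d-k+1}$ is the monoid of self-equivalences of $S^{d-k}$. Then the stage is a thickening $D(\zeta)$ of $S^{k-1}$ with $\zeta$ stably nontrivial. The degree-one attaching sphere of $e^k$ then has stably nontrivial normal data in $\partial_1$, and no $k$-handle can be attached.

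\emph{Consequences for (ii) and (iii).} The same phenomenon is what (ii) misses. Trivializations over $P_{\mathrm{old}}$ and over the handle differ along $S^{k-1}\times D^{d-k}$ by a clutching class in $\pi_{k-1}(G)$ that depends on the framing. So triviality on each piece does not give triviality of the union. Your fallback (iii) repairs Euclideanness only at the very end, which is too late to guarantee that the intermediate handles can be attached.

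\textbf{How to repair your route, and what the paper does instead.} You would need to carry ``the current triad is Euclidean'' as an inductive hypothesis. At each step you would suspend the triad so that Theorem~\ref{thm:embedded-thickening} supplies the embedded sphere; its thickening is then Euclidean, hence in the stable range equal to $S^{k-1}\times D^N$. You would then either choose the handle framing so that the clutching class vanishes, or restore Euclideanness before each new layer of handles. The first option is possible because $\pi_{k-1}(G_{d-k})\to\pi_{k-1}(G)$ is onto in the stable range. The second uses twisted suspension by a fibration $\eta$ that is stably equal to $\nu_P$ (not its negative, since $D(\eta)$ has Spivak fibration $\nu_P-\eta$) and is trivialized over $\partial_0P$.

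The paper avoids all of this by reversing the order:
\begin{enumerate}[(i).]
\item It takes a Euclidean thickening $P$ of $K$ and moves $L$ into $\partial P$.
\item It applies Theorem~\ref{thm:embedded-thickening} once, obtaining an embedded thickening $Q$ of $L$ in $\partial P\times D^j$ with complement $C$.
\item It uses the triad $(P\times D^j;Q,C\cup_{\partial P\times S^{j-1}}P\times S^{j-1})$.
\end{enumerate}
Euclideanness is then automatic, because $P\times D^j$ is a stabilization of $P$. No cell-by-cell induction or framing choices ever arise.
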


\begin{rem} A statement like Theorem \ref{bigthm:relative} is needed in \cite{Klein_compression2}. The latter paper
is a component of the author's approach to non-simply connected Poincar\'e surgery.

A relatively easy proof of the result using manifold transversality exists (see  \cite[\S3]{Klein_haef2} for a related result).  The current paper provides an alternative homotopy-theoretic approach with the intention of keeping the results of Poincar\'e surgery manifold-free.
\end{rem}

The suspension of a Poincar\'e triad $(P;\partial_0 P,\partial_1 P)$ of dimension $d$ is the Poincar\'e triad of dimension $d+1$  
\[
(P \times I, (\partial_0 P) \times I, S_P \partial_ 1P)\, ,
\]
where $S_P \partial_ 1P$ is the unreduced fiberwise suspension of $\partial_1 P$ over $P$.
Iterated suspension induces an operation on Poincar\'e thickenings of $(K,L)$. Two Poincar\'e thickenings of $(K,L)$
are {\it stably concordant} if they are concordant after some iterated suspension. 

\begin{bigthm} \label{bigthm:unique} There is only one Euclidean Poincar\'e  thickening
of $(K,L)$  up to stable concordance.
\end{bigthm}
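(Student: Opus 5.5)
Given two Euclidean Poincar\'e thickenings $(P,f)$ and $(Q,g)$ of $(K,L)$, of dimensions $d$ and $e$, I will first suspend both so that they have a common dimension $n$, with $n$ much larger than $\dim K$. Suspension preserves the Euclidean condition, since the Spivak fibration of a product with $I$ is the stabilization. The plan is to use Spanier--Whitehead/Poincar\'e duality to reduce the comparison to a comparison of the ``complementary'' data. The key observation is the following. For a Poincar\'e pair $(P,\partial P)$ of dimension $n$ with trivial Spivak fibration, the collapse map
\[
S^n \to P/\partial P
\]
is an $S$-duality. It exhibits $P_+$ as the $n$-dual of $P/\partial P$. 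Relatively, the triad structure gives a duality between $P/\partial_0 P$ and $P/\partial_1 P$.

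The argument proceeds by induction, first handling the absolute part and then the relative part.

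\textbf{Step 1 (boundary piece).} Apply the absolute uniqueness of Euclidean thickenings of $L$ (the case recalled in the introduction, from \cite{Klein_dualizing}) to the Poincar\'e thickenings $(\partial_0P,\partial_{01}P)$ and $(\partial_0Q,\partial_{01}Q)$ of $L$ of dimension $n-1$. Their Spivak fibrations are restrictions of trivial ones, hence trivial. After further suspension, this yields a weak equivalence of pairs
\[
h_0\:(\partial_0P,\partial_{01}P)\simeq(\partial_0Q,\partial_{01}Q)
\]
compatible with $f|_L$ and $g|_L$.

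\textbf{Step 2 (extend over $P$).} Use $f$ and $g$ to get an equivalence $P\simeq K\simeq Q$ extending $h_0$ up to homotopy. After replacing $Q$ by a mapping-cylinder model, we may assume this is a strict map of pairs $(P,\partial_0P)\to(Q,\partial_0Q)$.

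\textbf{Step 3 (the $\partial_1$ part).} It remains to produce a compatible equivalence $\partial_1P\simeq \partial_1Q$ over $P\simeq Q$ relative to $\partial_{01}$. Here I use the spine dimension hypothesis: $(P,\partial_1P)$ is $(n-\dim K-1)$-connected, so in the stable range the map $\partial_1P\to P$ is determined by its homotopy fiber and cofiber data. By duality, the relative Thom-type space $P/\partial_1 P$ is the $n$-dual of $K/L$ (using triviality of the Spivak fibration, so there is no twist). Hence the cofibers $P/\partial_1P$ and $Q/\partial_1Q$ are both identified with $D_n(K/L)$ compatibly with the given equivalences. I then plan to invoke a classification of ``$\partial_1$-boundaries'' in the metastable range: spaces over $P$, rel $\partial_{01}P$, whose relative cofiber is a prescribed dual. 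In this range such objects correspond to lifts of a stable map, by Blakers--Massey and Freudenthal. This gives an equivalence of triads after possibly one more suspension, and concordance follows since $h\circ f\simeq g$ by construction.

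The main obstacle is Step 3: showing that the pair $(P,\partial_1P)$ is recovered up to equivalence from $(P,\partial_0 P)$ plus the trivial Spivak data. I would first try the following. Write $\partial P\to P$ as the fiberwise sphere-bundle-like object whose fiberwise cofiber is the Spivak fibration. Triviality then forces $\partial P\simeq$ the stable ``complement'' of $P$ in $S^n$. This is unique in the stable range by the absolute uniqueness result, applied to the double $P\cup_{\partial_0}P$, or to $P$ viewed absolutely. The relative $\partial_1$ piece is obtained as the complement of $\partial_0$ in $\partial P$, which is again unique by Step 1 and a gluing/excision argument.
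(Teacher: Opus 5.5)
There is a genuine gap, and it sits in Step 3. Step 3 is essentially the whole content of the theorem, and neither mechanism you offer for it works as stated.

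\textbf{The duality argument.} Identifying $P/\partial_1P$ with the $n$-dual of $K/L$ is a statement about one non-parametrized stable homotopy type. It does not determine the map $\partial_1P\to P$ rel $\partial_{01}P$. In the metastable range, fiberwise Freudenthal/Blakers--Massey shows that $\partial_1P$, as a space over $P$ with highly connected fibers, is determined by its fiberwise suspension spectrum over $P$, which is an $\Omega P$-equivariant object. You would therefore have to identify these parametrized objects for $P$ and $Q$, compatibly with $h_0$ on $\partial_{01}$. You never specify the ``lifts of a stable map'', and nothing shows that the two lifts agree.

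\textbf{The fallback.} Using the absolute result to identify $(P,\partial P)$ with $(Q,\partial Q)$ is the right first move; it is how the paper begins. But the complement of $\partial_0P$ in $\partial P$ is not unique ``by Step 1 and a gluing/excision argument''. The complement of a codimension-zero Poincar\'e embedding is not determined by the ambient space and the embedded piece; excision only recovers its relative homology.

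\textbf{Steps 1--2.} These produce an equivalence $(P,\partial_0P)\to(Q,\partial_0Q)$ with no control over $\partial_1$. There is also no reason it is compatible with the equivalence $(P,\partial P)\simeq(Q,\partial Q)$ coming from the absolute case.

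\textbf{The missing ingredient} is a stable uniqueness theorem for embedded thickenings. Once $(P,\partial P)=(Q,\partial Q)$, set $N=\partial P$. The faces $\partial_0P$ and $\partial_0Q$ are then codimension-zero Poincar\'e embeddings in $N$ of thickenings of $L$, with underlying maps $f|_L,g|_L\colon L\to N$. In the stable range such embeddings are concordant along any homotopy of underlying maps \cite[thm.~A]{Klein_haef}.

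To apply this you need the homotopy from $f|_L$ to $g|_L$ to lie in $N$; Proposition~\ref{prop:stable-euclid} only gives a homotopy in $P$. The paper fixes this with one more stabilization: replace $P$ by $P\times I$ and use $P\times 0\subset\partial(P\times I)$.

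After $j$ decompressions, the concordance gives two equivalences at once:
\begin{itemize}
\item $\partial_0P\times D^j\simeq\partial_0Q\times D^j$ on the $\partial_0$-faces;
\item $S^j_N\partial_1P\simeq S^j_N\partial_1Q$ on the complements, rel $N\times S^{j-1}$.
\end{itemize}
Gluing the second to the identity of $P\times S^{j-1}$ gives $S^j_P\partial_1P\simeq S^j_P\partial_1Q$, and hence an equivalence of triads compatible with the maps from $(K,L)$. This simultaneous, compatible control of both faces is exactly what your Steps 1--3 try to obtain separately. A correct version of your parametrized-spectrum approach would amount to reproving that embedding theorem.
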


 \subsection{Conventions} We work in the convenient model category of
compactly generated weak Hausdorff spaces \cite[thm.~2.4.25]{Hovey}.
Poincar\'e duality spaces and pairs are assumed to be finitely dominated.

\begin{out} In Section \ref{sec:thickenings} we recall the notion of Poincar\'e thickening and we prove that
any map from a finite complex to a Poincar\'e space admits an embedded thickening after taking the
cartesian product with a disk of sufficiently high dimension. Section \ref{sec:rel-thickening} contains the proof of 
Theorems \ref{bigthm:relative} and \ref{bigthm:unique}. 
 \end{out}
 
 \section{Embedded Poincar\'e thickenings} \label{sec:thickenings}

Let $K$ be a finite CW complex of dimension $\le k$.
 
\begin{defn}   A {\it Poincar\'e thickening} of $K$ of dimension $d$ consists of
 \[
 ((P,\partial P), h)
 \]
 in which
 \begin{enumerate}[(i).]
 \item  $(P,\partial P)$ is a Poincar\'e pair which is $(d-k-1)$-connected, and
 \item $h\: K \to P$ is a weak homotopy equivalence.
 \end{enumerate}
 \end{defn}
 
 In general, we assume that $k \le d-3$ so that $(P,\partial P)$ is $2$-connected.
 When the boundary $\partial P$ is understood, we will use the notation $(P,h)$ for the Poincar\'e thickening.
 
If $(P,h)$ is a Poincar\'e thickening of dimension $d$, then 
\[
\sigma^j(P,h) := (P\times D^j,h_j)
\]
 is a Poincar\'e thickening of dimension $d+j$, where
\[
h_j\: K @> h >> P \subset P \times D^j\, .
\]
A {\it concordance} of $d$-dimensional Poincar\'e thickenings  from $(P, h_0)$ to $ (Q, h_1)$ 
is a weak homotopy equivalence $H\: (P,\partial P) @> \simeq >> (Q,\partial Q)$ such that
$H \circ h_0$ is homotopic to $h_1$.

A Poincar\'e thickening $(P,h)$  of $K$ is  {\it Euclidean} if $(P,\partial P)$ has trivial Spivak normal fibration.

\begin{prop} \label{prop:stable-euclid} There exists a Euclidean thickening of $K$ which is unique up to stable concordance.
\end{prop}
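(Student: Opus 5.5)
Existence comes from a homotopy-theoretic regular neighborhood: embed $K$ up to homotopy in a large sphere and take the complement of a "thickening" from the Spivak--Wall construction. Uniqueness comes from the free transitive action of $[K,BG]$ recalled in the introduction. I would not use manifolds. Instead I would take the homotopy-theoretic construction of \cite{Klein_dualizing}, which, given $K$ finite of dimension $\le k$ and $d$ large (say $d \ge 2k+3$), produces a Poincar\'e pair $(P,\partial P)$ of dimension $d$ with $P \simeq K$ and trivial Spivak normal fibration.

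To make this concrete, choose a homotopy-finite Spanier--Whitehead dual $D$ of $K_+$, realized unstably in $S^d$. In \cite{Klein_dualizing} this is done by forming the homotopy cofiber of the "diagonal" map and identifying the result with the complement of $K$ in $S^d$. Then $(P,\partial P)$ is the pair with $P\simeq K$, and $\partial P$ is the homotopy fiber of $P \to P\cup C(\partial P)\simeq \Sigma^{d}\! \cdot$ (Thom space of the trivial fibration). Equivalently, $P\cup_{\partial P} C \simeq S^d$, where $C$ is the complementary piece.

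For existence there are three checks:
\begin{enumerate}[(a).]
\item $(P,\partial P)$ satisfies Poincar\'e duality. This follows from Alexander duality in $S^d$, since the union $P\cup_{\partial P}C\simeq S^d$ is a Poincar\'e space and both pieces are finitely dominated.
\item The connectivity condition holds: $(P,\partial P)$ is $(d-k-1)$-connected. By general position (Blakers--Massey applied to the complement), $\partial P\to P$ is $(d-k-1)$-connected because $K$ has dimension $\le k$.
\item The Spivak normal fibration is trivial. Its Thom space $P/\partial P$ admits a degree-one map from $S^d$, obtained by collapsing $C$. This makes the Spivak fibration stably trivial, by the characterization of the Spivak fibration through the reducibility of $P/\partial P$, together with the fact that $P\simeq K$ is finite.
\end{enumerate}
Stabilization by $D^j$ preserves all three properties, so we get Euclidean thickenings in every large dimension.

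For uniqueness, let $(P,h)$ and $(Q,g)$ be Euclidean thickenings. After suspending, assume both have dimension $d$ in the stable range. Each determines, via $h$ and $g$, a stable spherical fibration over $K$: its Spivak normal fibration pulled back along the equivalence with $K$. Both of these are trivial. I would then invoke the classification recalled in the introduction: $\mathcal T_\infty(K)$ is a $[K,BG]$-torsor, and the classifying map of a thickening is the difference of its Spivak fibration class from the basepoint class. Two thickenings with the same Spivak class are therefore stably concordant.

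If I cannot cite that classification directly, I would prove it in the stable range as follows. First, a thickening is determined up to concordance by the map $\partial P\to P$ up to equivalence over $K$. Second, in the range $d\ge 2k+3$, the fiber of this map (a $(d-k-2)$-connected space over $K$) is the unstable spherical fibration of fiber dimension $d-k-1$ associated to the Spivak normal data, via the identification of $\partial P$ with the sphere bundle of the Spivak fibration twisted by the tangent-like datum of $K$. Third, in the stable range such fibrations are classified by $[K,BG]$, since $BG_{n}\to BG$ is highly connected relative to $\dim K$.

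The main obstacle is this last step. Identifying $\partial P\to P$ with a spherical fibration in the stable range requires controlling a map of finite complexes up to equivalence purely from its homotopy fiber, together with its Poincar\'e duality. I would try to handle it with the relative Blakers--Massey theorem and the Freudenthal-type stability of fiberwise Spanier--Whitehead duality over $K$.
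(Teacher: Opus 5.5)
Your existence step, insofar as it cites \cite{Klein_dualizing}, matches the paper, which offers nothing beyond that citation (or Spivak and Wall via manifolds). The genuine gap is in uniqueness.

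Your first route uses the torsor structure of $\mathcal T_\infty(K)$ as recalled in the introduction. But there the preferred basepoint, which your ``classifying map'' is measured against, is defined using exactly the uniqueness you are proving. The non-circular version (transitivity of the action plus the formula for the Spivak fibration of a twisted suspension) just cites a classification at least as strong as the proposition. So the content lies in your fallback, and its second step is false.

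The homotopy fiber of $\partial P\to P$ is a sphere only when $K$ is a Poincar\'e complex. If $\partial P\to P$ were an $S^{r-1}$-fibration, $(P,\partial P)$ would be its disk--sphere pair. The Thom isomorphism would then transport the duality of $(P,\partial P)$ to a duality of formal dimension $d-r$ on $K$. Concretely, take $K=S^1\vee S^1$ and $d\ge 4$. The regular neighborhood $P\subset\mathbb R^d$ has $\partial P\cong\#_2(S^1\times S^{d-2})$. Since $P\simeq K(F_2,1)$ and $\pi_1\partial P\cong\pi_1P$, the fiber is the universal cover of $\partial P$. That cover is $S^{d-1}$ minus a Cantor set, an infinite wedge of $(d-2)$-spheres. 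So there is no spherical fibration to classify via $BG_n\to BG$.

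What the fiber actually models is the dualizing spectrum. Poincar\'e duality for $(P,\partial P)$ identifies the fiberwise stabilization of $\partial P\to P$ over $P\simeq K$ with a suspension of the parametrized dualizing spectrum $\mathcal D_K$, twisted by the Spivak normal fibration. $\mathcal D_K$ is fiberwise a sphere exactly when $K$ is Poincar\'e, which is the only case your step covers. This is why, in the proof of Theorem \ref{thm:embedded-thickening}, $\partial\bar M$ is taken to be the unstable model $E_M$ of the suspended $\mathcal D_M$ rather than a sphere bundle.

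The repaired argument runs as follows:
\begin{itemize}
\item For a Euclidean thickening the twist is trivial, so the fiberwise stabilization of $\partial P\to P$ is a fixed suspension of $\mathcal D_K$.
\item Apply the fiberwise Freudenthal argument you mention to this object, whose fibers are $(d-k-2)$-connected over a $k$-dimensional base. In the stable range it shows that $\partial P\to P$ is determined up to equivalence over $K$ by its fiberwise stabilization.
\item Hence any two Euclidean thickenings of the same large dimension are concordant.
\end{itemize}

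Your concrete existence checks have related problems.
\begin{itemize}
\item In (a), a splitting $S^d\simeq P\cup_{\partial P}C$ into finitely dominated pieces does not make $(P,\partial P)$ a Poincar\'e pair. Also, defining $\partial P$ as the homotopy fiber of $P\to P\cup C(\partial P)$ is circular, so no complement has been constructed.
\item In (c), $P/\partial P$ is not the Thom space of the Spivak fibration. Reducibility of $P/\partial P$ is S-dual to a stable cohomotopy Thom class for that fibration, and this is what yields triviality. The degree-one map also presupposes the missing $C$.
\end{itemize}
In \cite{Klein_dualizing} one instead defines $\partial P$ directly as the unstable model of the suspended $\mathcal D_K$. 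Both duality and triviality of the Spivak fibration are then derived from properties of $\mathcal D_K$.
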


\begin{rem} The result follows from \cite{Spivak, Wall_thickening} if one is willing to appeal to manifold theory.
A homotopy-theoretic proof can be found  in \cite{Klein_dualizing}. See also \cite[thm.~B]{Klein_immersion}.
\end{rem}

Fix a Poincar\'e pair  $(M,\partial M)$ of dimension $d$. Let
\[
\cal I(\partial M)
\]
be the category whose objects are Poincar\'e pairs $(N,\partial N)$ with $\partial N = \partial M$.
A morphism $(N,\partial N) \to (N',\partial N')$ is a weak homotopy equivalence of pairs which
restricts to the identity on $\partial N$.   When the boundary is understood, we abbreviate notation 
and write $N$ in place of $(N,\partial N)$.

Given a Poincar\'e pair $(P,\partial P)$ of dimension $d$, we have a functor
\[
\kappa_P\: \cal I(\partial P \amalg \partial M) \to \cal I(\partial M)
\]
defined by $C\mapsto P \cup_{\partial P} C$.

\begin{defn}[{\cite{GK,Klein_compression2}}] A {\it Poincar\'e embedding} of $P$ in $M$ is an object of the
over category
\[
\kappa_P/M\, .
\]
\end{defn}

\begin{rem} An unraveling of the definition shows that an object of $\kappa_P/M$ amounts to specifying data of the form
 \[
 ((N;P,C),h)
 \]
in which 
\begin{enumerate}[(i).]
\item $(N;P,C)$ is a Poincar\'e triad in which $\partial P = P \cap C$ and  $\partial N = \partial M$,
\item $\partial C = \partial P \amalg \partial N$, and
 \item  $h\: (N,\partial N) @>{}_\sim >> (M,\partial M)$ is a weak equivalence of pairs
 which restricts to the identity on $\partial M$.
 \end{enumerate}
We call $h_{|P}\: P \to M$ the {\it underlying map} of the Poincar\'e embedding.
 \end{rem}

 \begin{rem} A Poincar\'e embedding of $P$ in $M$ with associated triad $(N;P,C)$ 
 determines a Poincar\'e embedding of $P \times D^j$ in $M \times D^j$ with triad
 \[
 (N\times D^j; P \times D^j,S^j_N C)\, ,
 \] 
where 
\[
S^j_N C = C\times D^j \cup_{C\times S^{j-1}} (N \times S^{j-1})
\]
is the $j$-fold unreduced fiberwise suspension of $C$ over $N$.
 \end{rem}

\begin{defn}[{\cite{Klein_haef}}] Let $f\: K \to M$ be a map, where $(M,\partial M)$ is a Poincar\'e pair of dimension $d$. 
An {\it embedded thickening} of $f$ consists of a Poincar\'e thickening $(P,h)$ of $K$ and a codimension-zero
Poincar\'e embedding with underlying map $F\:P \to M$ such that the composition
\[
K @> h >> P @> F >> M
\]
is homotopic to $f$.
\end{defn}
In the setting of the definition, we say that $F\circ h\: K \to M$ underlies an embedded thickening.

\begin{rem} An alternative way to describe an embedded thickening is to display a diagram
\[
\xymatrix{
& \partial P \ar[r]^{\subset} \ar[d]_{\cap} & C \ar[d] & \partial M \ar[l]\ar[dl] \\
K \ar[r]_h^{\simeq} & P \ar[r]_F & M
}
\]
in which the displayed square is a homotopy pushout.
Since $h$ is a weak homotopy equivalence, up to homotopy, the data amount to specifying
a commutative diagram of the form
\[
\xymatrix{
A \ar[r] \ar[d] & C \ar[d] & \partial M \ar[l]\ar[dl] \\
K \ar[r] & M
}
\]
in which the square is a homotopy pushout, where if $\bar K$is the mapping cylinder of the map $A\to K$, 
then $(\bar K,A) \simeq (P,\partial P)$.
\end{rem}

Let $f\: K \to M$ be a map, where $(M,\partial M)$ is a Poincar\'e pair of dimension $d$. 

\begin{thm}\label{thm:embedded-thickening} There is
an integer $j > 0$, such that the composition
\[
K @> f >> M @> \subset >> M \times D^j
\]
underlies an embedded thickening.
\end{thm}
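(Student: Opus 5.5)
We argue by induction on the number of cells of $K$, building $P$ as a relative handlebody and simultaneously building the complement $C$ inside $M\times D^j$. Throughout we may enlarge $j$ freely. The reason is that an embedded thickening of $K\to M\times D^j$ with triad $(N;P,C)$ yields one of $K\to M\times D^{j+i}$ by the preceding remark: take the triad $(N\times D^i;P\times D^i,S^i_N C)$, with thickening $\sigma^i(P,h)$. So we will always assume $d+j$ is large compared to $k=\dim K$. If $K$ is empty there is nothing to do, and for a $0$-cell we embed a small disk $D^{d+j}$. The latter is justified homotopy-theoretically by the standard fact that a point of a Poincaré pair has a Poincaré embedded disk neighborhood whose complement is obtained by removing a top cell (finite domination and stability make this available).

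For the inductive step, write $K=K'\cup_\alpha D^k$ with $\alpha\colon S^{k-1}\to K'$, and suppose $K'\to M\times D^j$ underlies an embedded thickening with data $(N';P',C')$. We carry out the following steps.
\begin{enumerate}[(1).]
\item Since $(P',\partial P')$ is $(d+j-k-1)$-connected, the composite $S^{k-1}\to K'\simeq P'$ compresses into $\partial P'$ when $d+j$ is large.
\item By Poincaré–Lefschetz duality for $(N';P',C')$, the pair $(N',C')$ is $(d+j-k-1)$-connected. Hence the restriction of $f$ to the cell, a map $(D^k,S^{k-1})\to(N',\partial P')$, deforms rel $S^{k-1}$ to a map $(D^k,S^{k-1})\to(C',\partial P')$.
\item Since $\partial P'\to P'$ is highly connected, the normal data of the attaching sphere in $\partial P'$ is trivial in the stable range. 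We therefore set
\[
P=P'\cup_{S^{k-1}\times D^{d+j-k}} D^k\times D^{d+j-k}\, ,
\]
which is a Poincaré pair of dimension $d+j$, and extend $h$ over the cell.
\end{enumerate}

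The complement is then required to be a pair $(C,\partial P\amalg\partial M)$ with
\[
C' \simeq C\cup_{D^k\times S^{d+j-k-1}} D^k\times D^{d+j-k}\, .
\]
Dually, this says that $C'$ is obtained from $C$ by attaching a single $(d+j-k)$-cell along a sphere in $\partial P$. In other words, we must "drill out" the core disk $(D^k,S^{k-1})\to(C',\partial P')$ in the homotopy category. This is the main obstacle, because homotopy theory has no tubular neighborhoods to do this directly.

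We plan to handle the obstacle in the metastable/stable range via fiberwise homotopy theory, as in \cite{Klein_haef}. Over $C'$, form the mapping cylinder of the core and take the fiberwise complement of the section. Concretely, define $C$ as the homotopy pullback of $C'\to C'\cup_{D^k}(\ast)$ against the appropriate sphere-bundle data. Then verify the Poincaré condition for $(C,\partial P\amalg\partial M)$ by the gluing lemma for Poincaré triads: given that $(N;P,C)$ glues to $N'\simeq M\times D^j$, duality for $C$ follows from duality for $N'$, for $P$, and for $\partial P$. If the pullback description fails to be Poincaré on the nose, we fall back on increasing $j$ once more and use the Euclidean thickening of Proposition~\ref{prop:stable-euclid}. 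Its trivial Spivak fibration identifies the handle's normal data with a trivial sphere bundle, so the required cofiber sequence $C\to C'\to S^{d+j-k}$ can be realized by Spanier–Whitehead duality; the resulting space is Poincaré because its dual is the known Poincaré space $P$.

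Finally, the new $N$ is $P\cup_{\partial P}C$, and it maps by a weak equivalence to $M\times D^j$ rel $\partial$ by construction of the homotopy pushout. The composite $K\to P\to M\times D^j$ is homotopic to $f$ because the core was chosen in the homotopy class of $f$ restricted to the cell. Since $K$ is finite, finitely many stabilizations suffice, which gives the integer $j$.
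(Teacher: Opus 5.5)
Your induction stalls exactly at the step you call the main obstacle, and the same obstacle already appears, unaddressed, in step (3). To attach $D^k\times D^{d+j-k}$ to $P'$ and get a Poincar\'e pair $(P,\partial P)$, you first need to write $\partial P'\simeq (S^{k-1}\times D^{d+j-k})\cup_{S^{k-1}\times S^{d+j-k-1}}E$ with $(E,S^{k-1}\times S^{d+j-k-1})$ a Poincar\'e pair. That is, you need a Poincar\'e embedding of the attaching tube in $\partial P'$; without $E$ the new boundary $\partial P=E\cup D^k\times S^{d+j-k-1}$ is not even defined. Poincar\'e spaces have no general position or tubular neighborhoods, so ``the normal data is trivial in the stable range'' does not produce $E$. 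Producing it is itself an embedded-thickening problem (for $S^{k-1}\to\partial P'$), which your induction hypothesis, on the cells of $K$ with $M$ fixed, does not cover.

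The drilling step has the same defect in a sharper form. Once you have a finitely dominated $C$ with a map from $\partial P\amalg\partial M$ such that $P\cup_{\partial P}C\simeq M\times D^j$ rel $\partial$, the Poincar\'e property of $C$ does follow from the gluing lemma. But \emph{producing} such a $C$ is the whole content of the theorem, and neither of your suggestions does it. ``The homotopy pullback of $C'\to C'\cup_{D^k}(\ast)$ against the appropriate sphere-bundle data'' is not a construction. The natural candidate, the homotopy fiber of a collapse $C'\to S^{d+j-k}$, is not finitely dominated and agrees with the desired complement only through a range. It would still have to be truncated and given a compatible map from $\partial P\amalg\partial M$, and you do neither. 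The fallback is not valid either: Spanier--Whitehead duality yields stable data, not an unstable space with a prescribed boundary map. Moreover, being S--W dual to something built from $P$ does not imply $\pi_1$-twisted Poincar\'e duality for $(C,\partial P\amalg\partial M)$.

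The paper avoids cell-by-cell drilling altogether. Using the $[K,BG]$-torsor structure on stable thickenings, it reduces to embedding into a Euclidean thickening $\bar M$ of $M$. It then produces $\partial P$, the complement, and $\partial\bar M$ simultaneously as the total spaces $E_K$, $E_{K\to M}$, $E_M$ of $j$-fold fiberwise suspensions of the parametrized dualizing spectra $\mathcal D_K\to\mathcal D_{K\to M}\leftarrow\mathcal D_M$. The relative dualizing spectrum $\mathcal D_{K\to M}$ is the global homotopy-theoretic substitute for the drilled-out complement that your induction needs at every stage. To make your route work you would need an independent stable-range existence theorem for Poincar\'e complements of spheres and disks, which is essentially the statement being proved.
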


\begin{proof}The proof is essentially  that of \cite[thm.~A]{Klein_dualizing2}.
In view of the fact that $\cal T_\infty(K)$ is a $[K,BG]$-torsor, it will be enough to show that there is a Euclidean thickening
of $M$, say $(\bar M,h)$,
together with an embedded Poincar\'e thickening of the composition
\[
K @> f >> M  @> h >> \bar M\, .
\]
If  $X$ is a space, we let $\Sp_{/X}$ be one of the model categories of parametrized spectra over $X$ 
\cite{Malkiewich_convenient, Hebestreit-et-al, May-Sigurdsson}.

In \cite[\S5]{Klein_dualizing2} we defined a parametrized spectrum
\[
\cal D_X \to X\, 
\]
whose fiber at $x\in X$ is a model for the dualizing spectrum $D_{\Omega_x X}$ of \cite{Klein_dualizing}, where  $\Omega_x X$ is a topological group model for the based
loop space of $X$.

A more modern formulation is to describe $\cal D_X$ as
\[
R{p_2}_\ast X^+\, ,
\]
where $X^+ = (X \times X) \amalg X$ (which after taking its fiberwise suspension spectrum, 
we may consider as an object of the category of parametrized spectra over $X\times X$),  
the map $p_2\: X \times X \to X$ is the
second factor projection, and
\[
R{p_2}_\ast \: \text{ho\,} \Sp_{/X\times X} \to \text{ho\,}\Sp_{/X}
\]
denotes the right derived functor of the pushforward  \cite[p.~42]{Malkiewich_convenient}, \cite[prop.~71]{Hebestreit-et-al}.
(For a map of spaces $f\: X\to Y$ the
pushforward $f_{\ast}$ is the right adjoint  to the pullback functor $f^\ast\: \Sp_{/Y} \to \Sp_{/X}$, the latter assigning to a parametrized spectrum over $Y$ its pullback to $X$.)

Similarly, if $a\: A\to X$ is a map, one has a {\it relative dualizing spectrum}
\[
\cal D_{A\to X} \to X
\]
which is given by the right derived functor $R{q_2}_\ast$, where $q_2$ is the composition
\[
A\times X \to X\times X @>p_2 >> X\, .
\]
Then one has a commutative diagram
\[
\xymatrix{
\cal D_{K} \ar[r] \ar[d] & \cal D_{K \to M} \ar[d]& \cal D_M \ar[dl]  \ar[l] \\
K \ar[r] _f & M 
}
\]
in which downward arrows are the structure maps of parametrized spectra (cf.~\cite[\S6]{Klein_dualizing2}).

As in \cite[cor.~51]{Klein_dualizing}, there is an integer $j \gg d$ such that the $j$-fold
fiberwise suspension applied to the total spaces of the parametrized spectra in the diagram yields
a diagram of spaces
\[
\xymatrix{
E_K \ar[r] \ar[d] &E_{K \to M} \ar[d]& E_M \ar[dl]  \ar[l]  \\
K \ar[r] & M & 
}
\]
where the downward pointing maps are fibrations. Set $\partial \bar M =E_M$
and let replace the map 
\[
 \partial \bar M \to M
\]
by a cofibration $\partial \bar M \to \bar M$.  Then $(\bar M,\partial \bar M)$
is a Poincar\'e pair of dimension $j-d$ with trivial Spivak normal fibration, and the diagram
\[
\xymatrix{
E_K \ar[r] \ar[d] &E_{K \to M} \ar[d]& \partial \bar M \ar[l] \ar[dl] \\
K \ar[r] & \bar M
}
\]
defines an embedded Poincar\'e thickening of the map $K \to \bar M$.
\end{proof}

\section{Proof of Theorems \ref{bigthm:relative} and \ref{bigthm:unique}} \label{sec:rel-thickening}

 \begin{proof}[Proof of Theorem \ref{bigthm:relative}] Let $(K,L)$ be a finite CW pair. Choose a Euclidean thickening of $K$,
 say, $(P,f)$, where $f\: K \to P$ is a weak homotopy equivalence. Note that $f$  determines a map of pairs
 $(K,L) \to (P \times D^1,\partial (P\times D^1))$. For this reason, we may as well assume at the outset
$f$ is a map of pairs $(K,L) \to (P,\partial P)$. 

If $j$ is sufficiently large, then by Theorem \ref{thm:embedded-thickening}, the map 
\[
L \to \partial P @> \subset >> (\partial P) \times D^j
\]
underlies an embedded thickening of $L$ in $ (\partial P) \times D^j$, say 
\[
\xymatrix{
\partial Q \ar[r] \ar[d] & C \ar[d] & \partial P \times S^{j-1} \ar[l] \ar[dl]\\
Q \ar[r] &  \partial P \times D^j
}
\]
where $(Q,\partial Q)$ thickens $L$. Moreover, $(Q,\partial Q)$ has trivial Spivak normal  fibration.

Then we have a weak homotopy equivalence
\[
Q \cup_{\partial Q} (C \cup_{\partial P \times S^{j-1}} P \times S^{j-1}) \simeq \partial (P \times D^j)\, .
 \]
 Without loss of generality, we take this to be an identification.
Setting $P_j = P \times D^j$, we have a Poincar\'e triad
\[
(P_j; Q,W),
\]
where  $W = C \cup_{\partial P \times S^{j-1}} P \times S^{j-1}$, which relatively thickens the pair $(K,L)$, and has trivial Spivak normal fibration.
\end{proof}

\begin{proof}[Proof of Theorem \ref{bigthm:unique}]
Let
\[
 ((P;\partial_0P,\partial_1P),f)
 \qquad\text{and}\qquad
 ((Q;\partial_0Q,\partial_1Q),g)
\]
be Euclidean Poincar\'e thickenings of $(K,L)$. On forgetting the
decomposition of the boundary, these determine Euclidean Poincar\'e
thickenings of $K$. After stabilization they are concordant by
Proposition~2.2. We may therefore assume that
\[
 (P,\partial P)=(Q,\partial Q)
\]
and that there is a homotopy
\[
 F\colon K\times I\longrightarrow P
\]
from $f$ to $g$.

After one further stabilization, we may assume that $F$ factors through
$\partial P$. Indeed, replace $P$ by $P\times I$ and compose $F$ with
the inclusion
\[
 P\times 0\subset \partial(P\times I).
\]
Thus
\[
 F|_{L\times I}\colon L\times I\longrightarrow \partial P
\]
is a homotopy from $f|_L$ to $g|_L$.

The pairs
\[
 (\partial_0P,\partial_{01}P)
 \qquad\text{and}\qquad
 (\partial_0Q,\partial_{01}Q)
\]
determine embedded Poincar\'e thickenings of the endpoint maps
\[
 f|_L,g|_L\colon L\longrightarrow \partial P.
\]
After further stabilization, these are concordant along
$F|_{L\times I}$ by \cite[thm.~A]{Klein_haef}. Set
\[
 \partial P= N =\partial Q.
\]
After $j$ decompressions, the complements of the two embedded
thickenings are
\[
 C_P=S_N^j\partial_1P
 \qquad\text{and}\qquad
 C_Q=S_N^j\partial_1Q.
\]
By the definition of concordance of embedded thickenings, $C_P$ and
$C_Q$ are weakly equivalent relative to the common subspace
$N\times S^{j-1}$. Likewise, the source thickenings
\[
 \partial_0P\times D^j
 \qquad\text{and}\qquad
 \partial_0Q\times D^j
\]
are weakly equivalent.

On the other hand,
\[
 S_P^j\partial_1P
 =
 C_P\cup_{N\times S^{j-1}}(P\times S^{j-1})
\]
and
\[
 S_P^j\partial_1Q
 =
 C_Q\cup_{N\times S^{j-1}}(P\times S^{j-1}).
\]
Consequently, gluing the equivalence $C_P\simeq C_Q$ to the identity
of $P\times S^{j-1}$ gives a weak equivalence
\[
 S_P^j\partial_1P
 \ \simeq\
 S_P^j\partial_1Q.
\]
Together with the weak equivalence of the $\partial_0$-faces and the
identity of $P\times D^j$, this gives a weak equivalence of Poincar\'e
triads
\[
 \bigl(P\times D^j;\partial_0P\times D^j,
       S_P^j\partial_1P\bigr)
 \ \simeq\
 \bigl(P\times D^j;\partial_0Q\times D^j,
       S_P^j\partial_1Q\bigr).
\]
The equivalence is compatible, up to homotopy, with the maps from
$(K,L)$, by the choice of $F$. Hence the original relative
Poincar\'e thickenings are stably concordant.
\end{proof}


 \end{document}